\theoremstyle{plain}
\newtheorem{thm}{Theorem}[section]
\newtheorem{cor}[thm]{Corollary}
\newtheorem{pro}[thm]{Proposition}
\newtheorem{ex}[thm]{Example}
\newtheorem{que}[thm]{Question}
\newtheorem{prob}[thm]{Problem}
\newtheorem{df}[thm]{Definition}
\newtheorem{rem}[thm]{Remark}
\newcommand{\ind}{\mathrm{ind}\,}
\begin{document}

\title{From homogeneity to discrete homogeneity}

\author{Vitalij A.~Chatyrko and Alexandre Karassev}

\begin{abstract} This is a survey of recent and classical results concerning various types of homogeneity, such as $n$-homogeneity, discrete homogeneity, and countable dense homogeneity. Some new results are also presented, and several problems are posed.  
\end{abstract}

\makeatletter
\@namedef{subjclassname@2020}{\textup{2020} Mathematics Subject Classification}
\makeatother

\keywords{homogeneity, $n$-homogeneity, discrete homogeneity}

\subjclass[2020]{Primary 54H15; Secondary 54C20, 54F45, 54H11}

\maketitle

All spaces are supposed to be Tychonoff. For standard topological notions we refer to \cite{E}.
\section{Homogeneous spaces} 

\begin{df}\label{homogen} A topological space $X$ is called {\it homogeneous} if for any points $x, y$ of $X$ there exists a homeomorphism $f: X \to X$ such that $f(x) = y$.
\end{df}

 Note that any homogeneous space is {\it similar} around all its points from the topological point of view. 

\begin{ex}  The following spaces are not homogeneous:
\begin{itemize}
\item[(a)] Any space which has both isolated points and non-isolated points. 
\item[(b)] Any closed bounded interval of the real line $\mathbb R$.
\item[(c)] Any space $X$ which has points $p,q$ such that
$\ind_p X \ne \ind_q X$.
\end{itemize}
\end{ex}

\begin{ex} The following spaces are homogeneous.
\begin{itemize}
\item[(a)] Any discrete topological space.
\item[(b)] The real line $\mathbb R$ and its subspace $\mathbb Q$ of rational numbers.
\item[(c)] The n-dimensional sphere $S^n, n \geq 1$.
\item[(d)] The Sorgenfrey line $\mathbb S$ (the set of real numbers with the base of topology  provided by intervals $[a, b)$). 

\end{itemize}

\end{ex}
The following statement is obvious.

\begin{thm} If $\{X_\alpha, \alpha \in A\}$, is a family of homogeneous spaces then the topological product
$\prod_{\alpha \in A} X_\alpha$ is also a homogeneous space.
\end{thm}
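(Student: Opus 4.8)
The plan is to build the required homeomorphism of the product coordinatewise, exploiting the homogeneity of each factor separately. Given two points $x = (x_\alpha)_{\alpha \in A}$ and $y = (y_\alpha)_{\alpha \in A}$ of $\prod_{\alpha \in A} X_\alpha$, I would, for each index $\alpha$, invoke the homogeneity of $X_\alpha$ to obtain a homeomorphism $f_\alpha \colon X_\alpha \to X_\alpha$ with $f_\alpha(x_\alpha) = y_\alpha$. The candidate map is then the product map $f = \prod_{\alpha \in A} f_\alpha$, defined by $f\big((z_\alpha)_{\alpha \in A}\big) = (f_\alpha(z_\alpha))_{\alpha \in A}$, which visibly sends $x$ to $y$.

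It remains to check that $f$ is a homeomorphism of $\prod_{\alpha \in A} X_\alpha$ onto itself. First I would verify that $f$ is a bijection: its inverse is the product map $\prod_{\alpha \in A} f_\alpha^{-1}$, since each $f_\alpha$ is invertible and the two product maps compose to the identity in every coordinate. For continuity I would appeal to the universal property of the product topology, namely that a map into $\prod_{\alpha \in A} X_\alpha$ is continuous if and only if its composition with each coordinate projection $\pi_\beta$ is continuous. Here $\pi_\beta \circ f = f_\beta \circ \pi_\beta$, which is continuous as a composition of continuous maps, so $f$ is continuous. Applying the identical argument to $\prod_{\alpha \in A} f_\alpha^{-1}$ shows $f^{-1}$ is continuous as well.

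Since $x$ and $y$ were arbitrary, the existence of such an $f$ establishes that the product is homogeneous. There is no genuine obstacle in this argument; the only point requiring a little care is the continuity of the product map, which is precisely where the product topology (rather than, say, the box topology) is used, and this is handled cleanly by the projection criterion above. I note that the argument imposes no restriction on the cardinality of the index set $A$.
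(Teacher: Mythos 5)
Your argument is correct and is precisely the standard coordinatewise construction that the paper has in mind when it declares the statement obvious (the paper gives no proof beyond that remark). The product map $\prod_{\alpha} f_\alpha$ together with the projection criterion for continuity is exactly the intended reasoning, so nothing further is needed.
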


\begin{ex} The following are examples of homogeneous spaces which are homeomorphic to products of homogeneous spaces.
\begin{itemize}
\item[(a)] The space $\mathbb P$ of irrational numbers which is homeomorphic to $Z^\omega$, where $Z$ is the space of integer numbers.
\item[(b)] The Cantor set $\mathbb C$ (since $\mathbb C$ is homeomorphic to $D^\omega$, where $D$ is the two-points discrete space).
\item[(c)] The topological products $\mathbb Q \times \mathbb C$, $\mathbb Q \times \mathbb P$.

\end{itemize}
\end{ex}

\begin{ex} The following spaces are also homogeneous:
\begin{itemize}

\item[(a)] The  Hilbert cube $[0,1]^\omega$.
\item[(b)] Erd\"os spaces $E(\mathbb Q)$ and $E(\mathbb P)$ 
(where $E(\mathbb X) = \{\overline{x} \in l_2: x_i \in \mathbb X \subset \mathbb R\}$ and $l_2$ is the real separable Hilbert space).
\item[(c)] Any topological group $G$. 

\end{itemize}
\end{ex}

Note that the spaces $\mathbb S$, $[0,1]^\omega$, and $S^n, n \ne 1,3$, are not topological groups unlike to other examples mentioned above.

\section{Generalizations of homogeneity} 

\begin{rem}\label{remark} One can substitute in Definition \ref{homogen} the points $x, y$ with the sets $A, B$ from a given non-empty class $\mathcal C$ of subsets of $X$, and require the homeomorphism $f\colon X\to X$ to be such that $f(A) = B$.

Note the following natural restrictions on the sets $A$ and $B$ for such a homeomorphism to exist: $A \simeq B$ and $X \setminus A \simeq X \setminus B$, where $\simeq$ means homeomorphic. In particular, 
$|A| = |B|$ and $|X \setminus A| = |X \setminus B|.$
\end{rem}

Further, we will discuss stronger forms of homogeneity based on Remark \ref{remark}.

Let $n$ be a positive integer. In 1954 Burgess \cite{Bu} called a topological space $X$ {\it $n$-homogeneous}  if
for any two  subsets $A$ and $B$ of $X$ of cardinality $n$ there exists a homeomorphism $f\colon X \to X$, such that $f(A)=B$.  Note that the $1$-homogeneous spaces are  homogeneous spaces in the sense of Definition~\ref{homogen}.  Further,  he called $X$ {\it strongly $n$-homogeneous} if
any bijection  $f\colon A \to B$ between  subsets $A$ and $B$ of $X$ of cardinality $n$ extends to a homeomorphism  $f\colon X \to X$. 

\begin{pro} Let $n$ be an integer $> 1$ and $X$ a topological space. Then the following is valid.
\begin{itemize}
\item[(a)] If $X$ is strongly $n$-homogeneous then $X$ is strongly $(n-1)$-homogeneous.
\item[(b)] If $X$ is strongly  $n$-homogeneous then $X$ is  $n$-homogeneous.
\item[(c)] If $X$ is $n$-homogeneous then $X$ is  $1$-homogeneous, i.e. homogeneous (see \cite[Theorem 1]{Bu}).

\end{itemize}

\end{pro}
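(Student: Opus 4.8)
My plan is to dispatch (b) and (a) quickly and to spend the effort on (c), which is the substantive statement (Burgess's Theorem~1); for it I would give a self-contained orbit-theoretic proof. Part (b) is immediate: given $n$-point sets $A,B$, pick any bijection $g\colon A\to B$ (one exists since $|A|=|B|=n$); by strong $n$-homogeneity $g$ extends to a self-homeomorphism $f$ of $X$, and because $g$ maps $A$ onto $B$ we get $f(A)=B$, so $X$ is $n$-homogeneous.

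For (a), let $g\colon A\to B$ be an arbitrary bijection between $(n-1)$-point sets; the idea is simply to enlarge both sets by one point and invoke the stronger hypothesis. Since finite Tychonoff spaces are discrete (hence strongly $m$-homogeneous for every $m$), I may assume $|X|\ge n$, so I can choose $a\in X\setminus A$ and $b\in X\setminus B$. Extending $g$ to $\tilde g\colon A\cup\{a\}\to B\cup\{b\}$ by $\tilde g(a)=b$ gives a bijection between $n$-point sets, which by strong $n$-homogeneity extends to a self-homeomorphism $f$; this $f$ also extends $g$, proving strong $(n-1)$-homogeneity.

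The heart of the matter is (c). I would pass to the group $G$ of self-homeomorphisms of $X$ and its partition of $X$ into orbits, and use the trivial but decisive fact that $h(s)$ lies in the orbit of $s$ for every $h\in G$ and every point $s$; hence any self-homeomorphism leaves unchanged the multiset of point-orbits of a finite set. Now fix distinct $p,q\in X$; it suffices to show they lie in a common orbit. If $X$ is finite it is discrete and so homogeneous, so I may assume $|X|\ge n+1$ and pick an $(n-1)$-point set $C\subseteq X\setminus\{p,q\}$. Setting $S=C\cup\{p\}$ and $S'=C\cup\{q\}$, $n$-homogeneity yields $h\in G$ with $h(S)=S'$; since $h$ preserves orbit-multisets, $S$ and $S'$ have the same orbit-multiset. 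Cancelling the common contribution of $C$ forces $\mathrm{orb}(p)=\mathrm{orb}(q)$, i.e. some homeomorphism carries $p$ to $q$; as $p,q$ were arbitrary, $X$ is homogeneous.

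I expect the main obstacle to be (c), and within it the step that must be handled carefully is the \emph{cancellation}: one has to see that, because each point stays in its own orbit, the equality $h(S)=S'$ gives literally $\{\mathrm{orb}(p)\}\uplus\{\mathrm{orb}(c):c\in C\}=\{\mathrm{orb}(q)\}\uplus\{\mathrm{orb}(c):c\in C\}$ as multisets, whence $\mathrm{orb}(p)=\mathrm{orb}(q)$ by cancellativity of multiset union. The only other thing to watch is the cardinality bookkeeping guaranteeing a set $C$ of size $n-1$ disjoint from $\{p,q\}$, which is precisely where the hypothesis $|X|\ge n+1$ (equivalently, $X$ infinite) enters, the remaining finite cases being absorbed into the observation that finite Tychonoff spaces are discrete.
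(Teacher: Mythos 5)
Your proposal is correct. The paper offers no proof of this proposition: parts (a) and (b) are left as immediate consequences of the definitions, and part (c) is dispatched by a citation to Burgess's Theorem~1. Your treatments of (a) and (b) are exactly the expected one-line arguments, with the right care taken over the degenerate finite cases (a finite Tychonoff space is $T_1$, hence discrete, hence strongly $m$-homogeneous for all $m$). For (c), your orbit-multiset argument is a clean, self-contained substitute for the citation: the key observation that any $h$ in the homeomorphism group satisfies $\mathrm{orb}(h(s))=\mathrm{orb}(s)$, so that $h(S)=S'$ forces equality of the orbit multisets of $S=C\cup\{p\}$ and $S'=C\cup\{q\}$, and cancellation of the common part contributed by $C$ then yields $\mathrm{orb}(p)=\mathrm{orb}(q)$. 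This is essentially Burgess's counting idea phrased invariantly, and it avoids the case analysis (``either $h(p)=q$ or one iterates'') that a naive attack on (c) would require. The cardinality bookkeeping ($|X|\ge n+1$ to find $C$ disjoint from $\{p,q\}$) is handled correctly. No gaps.
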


\begin{que}  Are there an integer $n \geq 3$ and a $n$-homogeneous space which is not $2$-homogeneous?
\end{que}

\begin{ex}\label{n-homogeneous}
 \begin{itemize}
\item [(a)] The real line $\mathbb R$ is $n$-homogeneous for every integer $n \geq 1$, strongly 
$2$-homogeneous, and not strongly $3$-homogeneous.

\item [(b)] The circle $S^1$ is $n$-homogeneous for every integer $n \geq 1$, strongly 
$3$-homogeneous, and not strongly $4$-homogeneous.

\item [(c)] Any connected $k$-dimensional manifold (in particular,  the Euclidean space  $\mathbb R^k$ and the sphere $S^k$), $k \geq 2,$ is strongly $n$-homogeneous for every integer $n \geq 1$
(see \cite[Theorem 14]{Ba}).
\end{itemize}
\end{ex}

Let us add that 
if $X$ is an $n$-homogeneous compact metric connected space then $X$ is strongly 
$n$-homogeneous or it is the circle $S^1$ \cite[Theorem 3.11]{U}, and
there is a connected Lindel\"of space that is $n$-homogeneous for every positive integer $n$ and not strongly $2$-homogeneous \cite[Example]{vM2}.

\begin{que} Is there strongly $4$-homogeneous space which is not strongly $5$-homogeneous?
\end{que}

\begin{ex} The topological union $\mathbb R \oplus \mathbb R$ is $1$-homogeneous
but  not $2$-homogeneous.
\end{ex}

\begin{que} Is there $2$-homogeneous space which is not $3$-homogeneous?
\end{que}

It is well-known that any finite dimensional metrizable locally connected locally compact group is a Lie group \cite{MZ} and hence if it is connected and has dimension $\geq 2$ then it is strongly $n$-homogeneous for any $n $. However, the answer to the following question is unknown to the authors.

\begin{que} Are there connected and locally connected topological groups which are not $n$-homogeneous for some $n>1$? 
\end{que}

\section{A generalization of the (strong) $n$-homogeneity} 
A family of subsets $\mathcal D$ of a topological space $X$ is 
{\it discrete} if each point $x \in X$ has a neighborhood  $Ox$ which  intersects at most one  set from $\mathcal D$. If each element of $\mathcal D$ is a singleton, we obtain the definition of {\it a discrete subset} of $X$
as the union of these singletons. 

Note that each discrete subset of $X$ is closed, and any finite subset of $X$ is discrete.

In 2022 the authors \cite{ChK1} suggested the following notions.

A Hausdorff topological space $X$ is called 
\begin{itemize}
\item {\it discrete homogeneous} ($DH$), if for any two discrete subsets $A$ and $B$ of $X$ with $|A|=|B|$ there exists a homeomorphism 
$f$ of $X$ onto itself such that $f(A)= B$;

\item {\it strongly discrete homogeneous} ($sDH$), if for any two discrete subsets $A$ and $B$ of $X$ and any bijection $f\colon A\to B$ this bijection extends to a homeomorphism of the whole $X$ onto itself.

\end{itemize}

Some simple facts from \cite{ChK1} are mentioned next.

\begin{pro} Let $X$ be a topological space. Then the following is valid.
\begin{itemize}
\item[(a)] If $X$ is a $sDH$ space then $X$ is a $DH$ space.
\item[(b)] If $X$ is a $sDH$ space then $X$ is  strongly $n$-homogeneous for every $n \geq 1$.
\item[(c)] If $X$ is a $DH$ space then $X$ is  $n$-homogeneous for every $n \geq 1$.

\end{itemize}

\end{pro}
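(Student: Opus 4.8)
The plan is to deduce all three implications directly from the definitions, leaning on two elementary observations already recorded in the excerpt: every finite subset of $X$ is discrete, and a bijection $f\colon A\to B$ is in particular surjective onto $B$, so any self-homeomorphism extending $f$ carries $A$ onto $B$. With these in hand, each part reduces to feeding the appropriate data into the $sDH$ or $DH$ hypothesis. I expect no genuine obstacle; the only thing to watch is the bookkeeping of which sets are discrete and which cardinalities match.

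For (a), I would begin with two discrete subsets $A,B$ satisfying $|A|=|B|$. The equality of cardinalities produces a bijection $f\colon A\to B$, and since $A,B$ are discrete the $sDH$ hypothesis applies to this $f$ and extends it to a self-homeomorphism of $X$. By the surjectivity remark this extension sends $A$ onto $B$, which is exactly the $DH$ condition. For (b), I would take arbitrary subsets $A,B$ of cardinality $n$ together with an arbitrary bijection $f\colon A\to B$; because $A$ and $B$ are finite they are discrete, so the $sDH$ hypothesis applies verbatim and extends $f$ to a self-homeomorphism. That is precisely strong $n$-homogeneity.

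For (c), given subsets $A,B$ of cardinality $n$, finiteness again makes them discrete, and we have $|A|=|B|=n$, so the $DH$ hypothesis furnishes a self-homeomorphism taking $A$ onto $B$, establishing $n$-homogeneity. The step I would flag as the one needing explicit care is the passage from finite sets to the discrete-set hypotheses in (b) and (c): it is precisely the ``finite implies discrete'' fact that legitimizes invoking $sDH$ and $DH$, whose statements are phrased for arbitrary discrete subsets. In (a), the analogous point is merely that $|A|=|B|$ is what guarantees the existence of the bijection to which $sDH$ is applied, so that one is never trying to extend a map between sets of differing size.
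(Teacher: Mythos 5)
Your argument is correct and is exactly the intended one: the paper states these as ``simple facts'' without proof, and the only ingredients needed are the definitions together with the observation, recorded in the paper, that every finite subset is discrete (for (b) and (c)) and the existence of a bijection between equinumerous sets (for (a)). Nothing further is required.
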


\begin{ex} 
\begin{itemize}
\item[(a)] Any infinite discrete space  is strongly $n$-homogeneous for every $n \geq 1$ but  not a $DH$-space, and hence  not an $sDH$ space.
\item[(b)] The real line $\mathbb R$  is $n$-homogeneous for every $n \geq 1$ but  not a $DH$-space. 

\end{itemize}

\end{ex}

\begin{pro}\label{compact_sDH} Let $X$ be a compact Hausdorff space. Then:

\begin{itemize}
\item[(a)] $X$ is  a $sDH$-space iff it is strongly $n$-homogeneous for every $n \geq 1$;
\item[(b)] $X$ is  a $DH$-space iff it is $n$-homogeneous for every $n \geq 1$.
\end{itemize}

\end{pro}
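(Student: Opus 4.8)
The plan is to reduce both equivalences to a single structural fact about compact spaces and then to unwind the definitions. The decisive observation is the following lemma: in a compact Hausdorff space $X$, a subset $D$ is discrete (in the family-of-singletons sense defined above) if and only if $D$ is finite. The implication from finite to discrete is already recorded in the text. For the converse, I would argue that a discrete subset $D$ is closed (also recorded above), hence compact as a closed subset of the compact space $X$; moreover, the defining property of $D$ forces every point of $D$ to be isolated in $D$, since a neighborhood of a point $x\in D$ meeting at most one point of $D$ must meet it in $x$ alone. Thus $D$ carries the discrete subspace topology, and a compact discrete space is finite. Consequently, in the compact Hausdorff setting the discrete subsets are exactly the finite subsets.

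Granting the lemma, part (a) follows by comparing quantifiers. If $X$ is an $sDH$ space, then given any two $n$-element subsets $A,B$ and any bijection $f\colon A\to B$, the sets $A$ and $B$ are finite, hence discrete, so $f$ extends to a self-homeomorphism of $X$; thus $X$ is strongly $n$-homogeneous, and this holds for every $n$. Conversely, if $X$ is strongly $n$-homogeneous for every $n$, then any bijection between discrete subsets $A$ and $B$ is, by the lemma, a bijection between finite sets of some common cardinality $n$, and so extends to a homeomorphism; hence $X$ is $sDH$. Part (b) is proved by the identical bookkeeping, replacing ``the bijection $f\colon A\to B$ extends'' by ``there is a homeomorphism carrying $A$ onto $B$'' and ``strongly $n$-homogeneous'' by ``$n$-homogeneous''; here the requirement $|A|=|B|$ in the definition of $DH$ corresponds precisely to $A$ and $B$ having the same finite cardinality $n$.

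The only step demanding genuine care --- the closest thing to an obstacle --- is the verification underpinning the lemma, namely that the family-of-singletons notion of a discrete subset really does coincide with ``closed discrete subspace,'' so that the compactness argument applies. Once that identification is secured (closedness rules out limit points outside $D$, and the discreteness condition makes each point of $D$ isolated), the remainder is routine quantifier manipulation, and the essential role of compactness becomes transparent: it is exactly compactness that collapses the potentially infinite discrete subsets appearing in the definitions of $DH$ and $sDH$ down to the finite subsets governed by the various $n$-homogeneity conditions.
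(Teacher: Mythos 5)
Your proof is correct and is exactly the intended argument behind this proposition (which the survey states without proof, citing \cite{ChK1}): a discrete subset of a compact Hausdorff space is closed and discrete in the subspace topology, hence compact and discrete, hence finite, so the (s)DH conditions collapse to (strong) $n$-homogeneity for all $n$. The quantifier bookkeeping in both directions is handled correctly, including the point that a bijection $A\to B$ forces $|A|=|B|$.
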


\begin{ex} The circle $S^1$  is  $DH$  but  not  $sDH$.
\end{ex}

\begin{que} Does  there exist a noncompact metric space which is  $DH$  but not $sDH$?
\end{que}

\section{Zero-dimensional $sDH$ spaces}
In 1981 van Mill introduced {\it the strongly homogeneous spaces} as those all nonempty clopen subspaces of which are homeomorphic. Any connected space  is evidently strongly homogeneous. Therefore this concept is of interest only in the case of disconnected spaces.

The following  result was obtained by van Mill.

\begin{thm}(\cite[Theorem 3.1]{vM1}) Every homeomorphism between two closed nowhere dense subsets of a strongly homogeneous separable metrizable zero-dimensional space $X$ can be extended to an automorphism  of $X$. In particular, the space $X$  is homogeneous.
\end{thm}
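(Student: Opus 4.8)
The plan is to build the extension $\tilde h\colon X\to X$ explicitly, using zero-dimensionality to cut $X\setminus A$ and $X\setminus B$ into clopen ``cells'' and strong homogeneity to move the cells around. First I would dispose of a degenerate case: if $X$ has an isolated point $p$, then $\{p\}$ is clopen, so strong homogeneity forces every nonempty clopen set to be a singleton, whence $X$ is a (countable) discrete space; there the only closed nowhere dense set is $\emptyset$, the statement is vacuous, and homogeneity of a discrete space is trivial. So I may assume $X$ is crowded (has no isolated points), whence every nonempty clopen subset is infinite and, being clopen and $X$ itself being clopen, is homeomorphic to $X$. In particular every nonempty clopen set splits into countably many nonempty clopen pieces, which removes all cardinality obstructions when matching cells.

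Next I would record the separation tools coming from $\dim X=\ind X=0$ (these coincide for separable metrizable spaces): disjoint closed sets are separated by a clopen set, and any closed set inside an open set lies in a clopen set inside that open set. From this I get, firstly, that every clopen partition of the subspace $A$ lifts to a clopen partition of a clopen neighborhood of $A$ in $X$, and, by choosing the lifts inside small metric neighborhoods of the corresponding pieces, I can keep the $X$-diameters of the lifted cells controlled; and secondly, since $A$ is closed, a decreasing sequence of clopen sets $W_0=X\supseteq W_1\supseteq\cdots$ with $\bigcap_n W_n=A$ and $W_n$ inside the $1/n$-neighborhood of $A$.

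The core is a simultaneous induction producing matched clopen decompositions of $X\setminus A$ and $X\setminus B$. At stage $n$ I would choose matched refining clopen partitions $\mathcal A_n$ of $A$ and $\mathcal B_n=\{h(P):P\in\mathcal A_n\}$ of $B$ whose pieces have diameter $<1/n$ on both sides (achieved by intersecting a fine partition of $A$ with the $h$-pullback of a fine partition of $B$), lift them to clopen partitions of the annuli $W_n\setminus W_{n+1}$ and $W_n'\setminus W_{n+1}'$ with controlled $X$-diameters, and thereby peel off, under each piece $P$, a countable family of nonempty clopen cells in $X\setminus A$ together with a matched equinumerous family under $h(P)$ in $X\setminus B$. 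Strong homogeneity supplies, for each matched pair of cells, a homeomorphism between them; together with $h$ on $A$ these glue to a bijection $\tilde h\colon X\to X$ with $\tilde h|_A=h$. Continuity at $x\in X\setminus A$ is clear, since there $\tilde h$ agrees with a fixed cell-homeomorphism on a clopen neighborhood. For continuity at $a\in A$ I would use that any sequence $x_k\to a$ in $X\setminus A$ eventually enters every $W_m$, so its cells are peeled at stages $\to\infty$ and hence carry labels $P$ of diameter $\to0$ clustering at $a$; the matched image cells carry labels $h(P)$ of diameter $\to0$ clustering at $h(a)$, forcing $\tilde h(x_k)\to h(a)$. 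The symmetric argument for $\tilde h^{-1}$ gives continuity of the inverse, and here nowhere-density of $A$ and $B$ (equivalently density of their complements) is exactly what guarantees that the cells accumulate at every point of $A$ and of $B$, making $\tilde h$ onto and open.

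The main obstacle is this synchronized construction in the non-compact case. When $A$ is not compact the partitions $\mathcal A_n$ are only countable, and a countable union of clopen cells need not be clopen; so the delicate point is to arrange the annular cells to be pairwise disjoint, to genuinely exhaust $X\setminus A$, to have $X$-meshes tending to $0$, and to accumulate onto every point of $A$, all while keeping the bijective labelling synchronized with the $B$-side through $h$. Once this inductive peeling is set up correctly, the gluing and the continuity checks are routine. Finally, the clause ``in particular homogeneous'' follows by applying the extension to $A=\{x\}$, $B=\{y\}$ (singletons are closed and, as $X$ is crowded, nowhere dense) and the map $x\mapsto y$.
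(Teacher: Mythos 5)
The paper itself offers no proof of this theorem---it is quoted from van Mill \cite[Theorem 3.1]{vM1}, whose argument is exactly the Knaster--Reichbach-style cell-matching you outline---so your strategy is the right one, and your reduction to the crowded case and the observation that every nonempty clopen subset is then homeomorphic to $X$ are correct. However, two things keep the proposal from being a proof. The first is the auxiliary claim that ``every nonempty clopen set splits into countably many nonempty clopen pieces'': this is false when the clopen set is compact, since a partition of a compact set into nonempty open pieces is an open cover admitting no proper subcover and hence is finite. For $X=\mathbb C$ (or any compact cell arising in your annuli) this is exactly the situation, so the cardinality obstructions are \emph{not} all removed by splitting a single cell. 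What is true, and what must be used instead, is that every nonempty clopen set splits into any prescribed \emph{finite} number of nonempty clopen pieces, while nowhere density of $A$ guarantees infinitely many annular cells accumulating on each point of $A$, so a surplus of cells on one side at stage $n$ must be matched against cells borrowed from other stages on the other side.

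The second, more serious, issue is that the synchronized matching itself---a bijection $\phi$ between the clopen cells of $X\setminus A$ and those of $X\setminus B$ such that whenever distinct cells $C_k$ converge to $a\in A$ the images $\phi(C_k)$ converge to $h(a)$---is the entire mathematical content of the theorem, and you explicitly defer it (``once this inductive peeling is set up correctly\dots''). The difficulty is not merely that the families of cells under $P$ and under $h(P)$ at a given stage may have different cardinalities; one of them may be empty while the other is not, so the pairing cannot be done stage-by-stage and label-by-label as written, and any deferral scheme must simultaneously keep diameters and the distances to $P$ (respectively $h(P)$) tending to $0$ so that your continuity argument at points of $A$ and of $B$ survives. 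Everything else in the outline (lifting clopen partitions of $A$ to clopen partitions of a clopen neighbourhood, gluing, continuity of $\tilde h$ and $\tilde h^{-1}$, and the ``in particular'' clause via singletons) is indeed routine once that bookkeeping is actually carried out.
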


Note that the spaces $\mathbb C$, $\mathbb Q$,  $\mathbb P$, ${\mathbb C}\times\mathbb Q$, ${\mathbb P}\times\mathbb Q$ are strongly homogeneous \cite{vM1}, separable metrizable and zero-dimensional. Since each discrete subset of a space without isolated points is closed and nowhere dense we get that the spaces $\mathbb C$, $\mathbb Q$,  $\mathbb P$, ${\mathbb C}\times\mathbb Q$, ${\mathbb P}\times\mathbb Q$ are sHD. 

In 2022 the authors \cite{ChK1} defined {\it a property $\mathcal D$} as follows. 
A Hausdorff space $X$ possesses the property $\mathcal D$ if for any proper clopen homeomorphic subspaces $F$ and $G$ of $X$ the subspaces $X \setminus F$ and $X \setminus G$ are also homeomorphic.

Note that the spaces $\mathbb C$, $\mathbb Q$,  $\mathbb P$, ${\mathbb C}\times\mathbb Q$, ${\mathbb P}\times\mathbb Q$ and $\mathbb S$ have the property $\mathcal D$. 
Observe also that all mentioned above spaces are paracompact, zero-dimensional in the sense of dimension $\ind$ (i.e. they have topology bases consisting of clopen sets) and homogeneous.
Furthermore, there are other zero-dimensional in the sense of dimension $\ind$, paracompact,  homogeneous spaces possessing  property $\mathcal D$ (one of them is a van Douwen's subgroup $H$ of the circle group from \cite{vD} which is not strongly homogeneous).

Recall  that a topological space $X$ is called {\it collectionwise normal} if $X$ is $T_1$ and for every discrete family $\{F_s\}_{s \in S}$ of closed subsets of $X$ there exists a discrete family $\{V_s\}_{s \in S}$  of open subsets of $X$ such that $F_s \subset V_s$ for every $s \in S$. 

It is a well-known fact that every paracompact space is collectionwise normal.

\begin{thm}(\cite[Theorem 4.2]{ChK1}) Let $X$ be a collectionwise normal zero-dimensional in the sense of $\,\ind$ homogeneous topological space with  the property $\mathcal D$. Then $X$ is $sDH$.

In particular, the spaces  $\mathbb S$ and $H$ are sDH. 
\end{thm}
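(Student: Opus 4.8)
The plan is to construct the required self-homeomorphism of $X$ directly, by cutting $X$ into clopen pieces, moving one point of the discrete set inside each piece by homogeneity, and repairing the remainder by property $\mathcal D$; this avoids any back-and-forth of the kind needed in the separable metrizable case. Fix discrete subsets $A,B\subseteq X$ and a bijection $f\colon A\to B$. At the outset I would dispose of isolated points: if $X$ has one, homogeneity makes $X$ discrete, and an infinite discrete space violates property $\mathcal D$, so $X$ is finite and then visibly $sDH$. Hence I may assume $X$ has no isolated points, so that discrete subsets are closed and nowhere dense.

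First I would manufacture two discrete families of clopen sets adapted to $A$ and $B$. Since $X$ is $T_1$, the singletons $\{a\}$, $a\in A$, form a discrete family of closed sets, so collectionwise normality yields a discrete family of open sets $\{V_a\}_{a\in A}$ with $a\in V_a$; zero-dimensionality in the sense of $\ind$ then lets me shrink each $V_a$ to a clopen $U_a$ with $a\in U_a\subseteq V_a$, and $\{U_a\}_{a\in A}$ remains discrete. Treating $B$ the same way gives a discrete clopen family $\{W_b\}_{b\in B}$ with $b\in W_b$. Because a discrete family of closed sets has closed union, $U:=\bigcup_{a}U_a$ and $W:=\bigcup_{b}W_b$ are clopen.

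Next I would match the pieces. For each $a\in A$ homogeneity provides a homeomorphism $g_a\colon X\to X$ with $g_a(a)=f(a)$; replacing $W_{f(a)}$ by $g_a(U_a)\cap W_{f(a)}$ and $U_a$ by its $g_a$-preimage, I may assume $\phi_a:=g_a|_{U_a}\colon U_a\to W_{f(a)}$ is a homeomorphism carrying $a$ to $f(a)$, while both families stay discrete (the new members shrink the old ones). Gluing the $\phi_a$ over the discrete clopen family $\{U_a\}$ produces a homeomorphism $\Phi\colon U\to W$ with $\Phi|_A=f$. Now property $\mathcal D$ enters: since $U\simeq W$ are clopen, once I know both are proper I obtain a homeomorphism $\psi\colon X\setminus U\to X\setminus W$. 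Defining $F$ to be $\Phi$ on $U$ and $\psi$ on $X\setminus U$ gives a bijection of $X$ extending $f$, and it is a homeomorphism because both the domain partition $\{U_a\}\cup\{X\setminus U\}$ and the range partition $\{W_b\}\cup\{X\setminus W\}$ consist of clopen sets on which $F$, respectively $F^{-1}$, is continuous.

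Two steps carry the weight. Collectionwise normality (with $\ind X=0$) is exactly what converts the abstract discreteness of $A$ and $B$ into discrete \emph{clopen} families, and this is indispensable: it is discreteness that makes $U$ and $W$ clopen and that makes the glued maps $\Phi$, $\psi$, and finally $F$ continuous. The main obstacle I anticipate is the demand in property $\mathcal D$ that the clopen sets be \emph{proper}: if $U=X$ (or $W=X$) the desired $X\setminus U\simeq X\setminus W$ can fail. To handle this, before invoking $\mathcal D$ I would, whenever $U$ or $W$ equals $X$, delete from a single matched pair $(U_{a_0},W_{f(a_0)})$ a nonempty clopen set avoiding the marked point $a_0$ and its $\phi_{a_0}$-image — available precisely because $X$ has no isolated points — thereby making both $U$ and $W$ proper without disturbing discreteness or the matching. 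With both unions proper, property $\mathcal D$ supplies $\psi$ and the construction closes, proving $X$ is $sDH$.
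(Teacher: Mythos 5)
Your argument is correct, and it uses each hypothesis exactly as the cited proof of \cite[Theorem 4.2]{ChK1} does: collectionwise normality plus $\ind X=0$ to surround $A$ and $B$ by discrete families of clopen sets, homogeneity to match the members pointwise, and property $\mathcal D$ to transport the complement, with the reduction to the case without isolated points and the properness adjustment handled soundly. This is essentially the same approach as the paper's, so nothing further is needed.
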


Since  each zero-dimensional in the sense of dimension $\ind$ first countable strongly homogeneous space is homogeneous, we have the following.

 \begin{cor} (\cite[Corollary 2.7]{ChK1}) Let $X$ be a collectionwise normal first countable strongly homogeneous topological space with $\ind X =0.$ Then $X$ is sDH.
\end{cor}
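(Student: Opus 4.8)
The plan is to verify that $X$ satisfies every hypothesis of the preceding theorem and then to invoke it. By assumption $X$ is collectionwise normal and zero-dimensional in the sense of $\ind$, so only two properties remain to be checked: that $X$ is homogeneous, and that $X$ possesses the property $\mathcal D$.

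First I would establish homogeneity, and this is where strong homogeneity, first countability, and $\ind X = 0$ combine. Given two points $x, y \in X$, first countability together with zero-dimensionality in the sense of $\ind$ lets me choose decreasing clopen neighborhood bases $\{U_n\}$ at $x$ and $\{V_n\}$ at $y$ with $\bigcap_n U_n = \{x\}$ and $\bigcap_n V_n = \{y\}$. The successive differences $U_n \setminus U_{n+1}$ and $V_n \setminus V_{n+1}$ are clopen, hence, when nonempty, homeomorphic to one another by strong homogeneity; a back-and-forth matching of these clopen pieces, extended by $x \mapsto y$, assembles into a homeomorphism of $X$ onto itself carrying $x$ to $y$. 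This is precisely the quoted fact that a zero-dimensional (in $\ind$) first countable strongly homogeneous space is homogeneous, which I would cite directly rather than reprove.

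Second, property $\mathcal D$ is almost immediate. Let $F$ and $G$ be proper clopen subspaces of $X$; in fact the hypothesis that $F \simeq G$ is not even needed. Since $F$ and $G$ are proper and clopen, the complements $X \setminus F$ and $X \setminus G$ are nonempty clopen subspaces of $X$, and by the definition of strong homogeneity all nonempty clopen subspaces of $X$ are mutually homeomorphic; hence $X \setminus F \simeq X \setminus G$. Thus $X$ has property $\mathcal D$.

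Having shown that $X$ is a collectionwise normal, zero-dimensional (in $\ind$), homogeneous space with property $\mathcal D$, I would conclude from the preceding theorem that $X$ is $sDH$. The only substantive step is the passage from strong homogeneity to homogeneity; property $\mathcal D$ and the remaining hypotheses are either assumed outright or follow formally. Accordingly the main obstacle is the back-and-forth construction yielding the point-transitive homeomorphism, where first countability is used to exhaust the neighborhood bases down to the single points and $\ind X = 0$ guarantees that all the matched pieces are clopen.
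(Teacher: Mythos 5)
Your proposal is correct and follows essentially the same route as the paper: it reduces the corollary to the preceding theorem by noting that property $\mathcal D$ is immediate from van Mill's strong homogeneity (all nonempty clopen subspaces, in particular the complements of proper clopen sets, are homeomorphic) and by invoking the quoted fact that a first countable strongly homogeneous space with $\ind = 0$ is homogeneous. The back-and-forth sketch you give for that last fact is the standard argument and is consistent with what the paper cites rather than reproves.
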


Note that $\mathbb X^n \simeq \mathbb X$, whenever  $X$ is $\mathbb C$, $\mathbb Q$,  $\mathbb P$, ${\mathbb C}\times\mathbb Q$, ${\mathbb P}\times\mathbb Q$ and $n$ is any poitive integer. 
Since $\mathbb S^n$ is not normal for $n > 1$, $\mathbb S^n$ is not homeomorphic to $\mathbb S$ for $n > 1$.

\begin{pro}(\cite[Proposition 4.4]{ChK1}) For each $n > 1$ the space $\mathbb S^n$ is not  $DH$. 
\end{pro}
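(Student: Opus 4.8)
The plan is to refute $DH$ by producing two discrete subsets of equal cardinality that no self-homeomorphism can interchange, with the obstruction coming from the non-normality of $\mathbb S^n$. It suffices to treat $n=2$: the anti-diagonal $\Delta=\{(x,-x):x\in\mathbb R\}$ embeds in $\mathbb S^n$ by fixing the remaining coordinates at $0$, and one checks that this copy is still a discrete subset and that the separation argument below survives restriction to the closed slice $\mathbb S^2\times\{0\}^{\,n-2}\cong\mathbb S^2$; so everything transfers to $\mathbb S^n$. In $\mathbb S^2$ the set $\Delta$ is a discrete subset of cardinality $\mathfrak c$, and its rational part $P=\{(q,-q):q\in\mathbb Q\}$ and irrational part $Q=\Delta\setminus P$ are disjoint closed sets that cannot be enclosed in disjoint open sets; this is the classical witness to non-normality. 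By Remark \ref{remark}, a homeomorphism $f$ with $f(A)=B$ forces the pairs $(\mathbb S^2,A)$ and $(\mathbb S^2,B)$ to agree on every topological invariant, so it is enough to separate two equinumerous discrete sets by such an invariant.

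The invariant I would use is the following property $(\ast)$ of a discrete subset $S$: there is a partition $S=S_1\sqcup S_2$ with $S_1$ countable such that $S_1$ and $S_2$ cannot be put into disjoint open sets. Property $(\ast)$ is preserved by self-homeomorphisms, and $\Delta$ has it, witnessed by $(P,Q)$. One cannot upgrade this to full $C^*$-embedding: a cardinality count (the closure of a $C^*$-embedded discrete set of size $\mathfrak c$ would be its \v{C}ech--Stone compactification, of size $2^{2^{\mathfrak c}}>2^{\mathfrak c}\ge|\beta\mathbb S^2|$ for the separable space $\mathbb S^2$) shows that \emph{every} discrete subset of size $\mathfrak c$ fails to be $C^*$-embedded, so a coarser invariant such as $(\ast)$ is genuinely needed. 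The plan is then to build a discrete subset $B\subseteq\mathbb S^2$ with $|B|=\mathfrak c$ that does \emph{not} have property $(\ast)$, i.e. every countable $B_1\subseteq B$ can be separated from $B\setminus B_1$ by disjoint open sets. Granting such a $B$, if $\mathbb S^2$ were $DH$ we would get a homeomorphism $f$ with $f(B)=\Delta$; then $f^{-1}(P)$ is a countable subset of $B$, and any disjoint open sets separating $f^{-1}(P)$ from its complement would map under $f$ to disjoint open sets separating $P$ from $Q$. This contradicts both non-normality and the choice of $B$, so $\mathbb S^2$, and hence every $\mathbb S^n$ with $n>1$, is not $DH$.

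The hard part, and the step I expect to be the main obstacle, is the construction of $B$. The obvious candidates fail: for any anti-diagonal, and more generally for any discrete set lying on a decreasing graph over a domain that is somewhere dense-in-itself (for example the graph over a Cantor set), a Baire-category argument exhibits a countable dense piece that is inseparable from its complement, so all of these sets already possess $(\ast)$. Thus $B$ must be produced by a dedicated transfinite recursion of length $\mathfrak c$ which, at each stage, adjoins a point while using the zero-dimensionality of $\mathbb S^2$ to clopen-enclose it away from the countably many "threatening" partitions generated by the points chosen so far, so that in the limit every countable subset lands in a clopen set missing the rest. Maintaining discreteness, closedness, and this separation bookkeeping simultaneously is the technical core of the argument; once $B$ is in hand, the contradiction with $DH$ through non-normality is immediate as above.
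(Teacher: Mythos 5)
Your overall strategy---distinguish two equinumerous discrete subsets of $\mathbb S^2$ by a homeomorphism-invariant of pairs rooted in the non-normality witnessed by the anti-diagonal---is sound in outline, and the reduction of $\mathbb S^n$ to the closed slice $\mathbb S^2\times\{0\}^{n-2}$ is fine once the invariant is formulated in the ambient space. But the proof is not complete: everything hinges on the existence of a discrete subset $B$ of cardinality $\mathfrak c$ for which \emph{every} countable piece can be separated from its complement, and you do not construct it; you only say a transfinite recursion should do it. That is a genuine gap, and I believe it is worse than a technical one: the recursion as described cannot work (at stage $\alpha<\mathfrak c$ there are up to $|\alpha|^{\aleph_0}=\mathfrak c$, not countably many, ``threatening'' countable subsets, and the final verification must handle all $\mathfrak c$ countable subsets against their size-$\mathfrak c$ complements), and there is a structural reason to doubt any such $B$ exists. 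In any discrete subset $B$ of $\mathbb S^2$ all but countably many points are Euclidean condensation points of $B$, and any two sufficiently close points must each lie outside the other's basic box $[x,x+r)\times[y,y+r)$, which forces them into strictly decreasing position; so every uncountable discrete subset of $\mathbb S^2$ locally looks like the graph of a decreasing function---exactly the configuration for which you yourself concede the Baire-category argument produces an inseparable countable dense piece. Your invariant $(\ast)$ therefore very plausibly holds for \emph{all} discrete subsets of size $\mathfrak c$, in which case this route cannot close.

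The repair is to work with \emph{countable} witnesses, where the ``good'' set is trivial to exhibit, rather than with sets of size $\mathfrak c$, where (as your own ccc/$C^*$-embedding remarks already indicate) separability-type invariants tend to collapse. Take $A=\{(q,-q):q\in\mathbb Q\}$ and $B=\mathbb Z\times\{0\}$, both countable discrete subsets of $\mathbb S^2$. The family $\{[k,k+1)\times[0,1):k\in\mathbb Z\}$ is a discrete family of open sets expanding $B$, whereas a Baire-category argument of exactly the kind you describe shows that $A$ admits no expansion to a discrete family of open sets: if $(q,-q)\in U_q$ with $[q,q+\delta_q)^2$-type boxes inside $U_q$, and each irrational $t$ has a box of size $\epsilon_t$ meeting at most one $U_q$, then choosing $n$ with $\{t:\epsilon_t>1/n\}$ somewhere dense and two rationals $q_1<q_2$ close together with comparable $\delta$'s, one finds an irrational $t$ just below $q_1$ whose box meets both $U_{q_1}$ and $U_{q_2}$. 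Since expandability to a discrete open family is preserved by any self-homeomorphism carrying $A$ to $B$, the space $\mathbb S^2$ (and, via the slice, $\mathbb S^n$) is not $DH$. This is essentially the argument of \cite[Proposition 4.4]{ChK1}; it uses the same anti-diagonal phenomenon you identified but entirely avoids the problematic construction of $B$.
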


We do not know if $H \simeq H^2$.
\begin{que} 
Are  $H^n$ $(s)DH$ for each positive integer $n$?
\end{que}

Note that $\mathbb C \simeq \mathbb C^\omega$ and $\mathbb P \simeq \mathbb P^\omega$ but $\mathbb Q^\omega$
is not homeomorphic to $\mathbb Q$.

\begin{que} 
Are  $\mathbb Q^\omega$, $H^\omega$  $(s)DH$?
\end{que}

Since $\mathbb Q^\omega$, $H^\omega$ and $H^n, n \geq 2$, are collectionwise normal, zero-dimensional in the sense of $\ind$ and homogeneous, the following question naturally arises.

\begin{que} 
Do $\mathbb Q^\omega$, $H^\omega$, or $H^n, n \geq 2$, possess the property $\mathcal D$?
\end{que}

The following notion from \cite{ChK1} was motivated by Theorem 4.1. 

Let $X$ be a topological space and the class of all closed nowhere dense subsets of $X$ is not empty. 
The space $X$ is called
 {\it nowhere dense homogeneous} (NDH), if for any two closed nowhere dense subsets $F$ and $G$ of $X$ and any homeomorphism  $f\colon F \to G$ there exists an automorphism $\bar{f}$ of $X$ which extends $f$.

Theorem 4.1 implies the following.
\begin{pro} Every strongly homogeneous separable metrizable zero-dimensional space is NDH.
\end{pro}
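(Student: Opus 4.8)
The plan is to read the proposition off directly from Theorem 4.1, since the extension property that defines NDH is, almost word for word, the conclusion of that theorem: given any two closed nowhere dense subsets $F$ and $G$ of such an $X$ and any homeomorphism $f\colon F\to G$, Theorem 4.1 produces an automorphism of $X$ extending $f$. Thus the only point that genuinely needs attention is the standing hypothesis in the definition of NDH that the class of closed nowhere dense subsets of $X$ be nonempty.

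To settle that hypothesis I would first observe that a strongly homogeneous separable metrizable zero-dimensional space has no isolated points unless it reduces to a single point. Indeed, if $x_0$ were an isolated point, then $\{x_0\}$ would be a nonempty clopen subspace, so by strong homogeneity every nonempty clopen subspace of $X$ would be homeomorphic to a one-point space, i.e. would be a singleton. Since the clopen sets form a base, each point would then have a singleton clopen neighborhood and $X$ would be discrete; but a discrete strongly homogeneous space cannot contain two points, as a singleton and a doubleton are then non-homeomorphic clopen subspaces. Hence, apart from the degenerate one-point (or empty) space, $X$ has no isolated points.

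In the degenerate case NDH holds vacuously: the only closed nowhere dense subset is $\emptyset$, and the only self-homeomorphism of $\emptyset$ extends to the identity automorphism of $X$. Otherwise $X$ has no isolated points, so every singleton $\{x\}$ is closed (because $X$ is $T_1$) and nowhere dense (because $x$ is not isolated); consequently the class of closed nowhere dense subsets is nonempty, and the definition of NDH is applicable. Invoking Theorem 4.1 then supplies, for every homeomorphism between two closed nowhere dense subsets, an extending automorphism of $X$, which is exactly the NDH condition.

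I do not expect any real obstacle here: all of the topological substance is already carried by Theorem 4.1, and what remains is only the routine verification that the nonemptiness hypothesis in the definition of NDH is satisfied, which in turn reduces to the elementary remark about isolated points above.
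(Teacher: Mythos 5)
Your argument is correct and takes essentially the same route as the paper, which derives the proposition directly from Theorem 4.1 (the statement of that theorem is verbatim the NDH extension property). Your additional verification that the class of closed nowhere dense subsets is nonempty --- via the observation that such a space has no isolated points unless it is degenerate --- is a correct piece of extra care that the paper leaves implicit.
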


\begin{cor}\label{cor_NDH} The spaces $\mathbb C$, $\mathbb Q$,  $\mathbb P$, ${\mathbb C}\times\mathbb Q$, ${\mathbb P}\times\mathbb Q$ are NDH. 	
\end{cor}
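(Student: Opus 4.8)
The plan is to obtain the corollary as an immediate specialization of the Proposition stated just above it. The whole task reduces to confirming that each of the five spaces $\mathbb C$, $\mathbb Q$, $\mathbb P$, $\mathbb C\times\mathbb Q$, $\mathbb P\times\mathbb Q$ meets its three hypotheses: strong homogeneity, separability together with metrizability, and zero-dimensionality. Conveniently, these very facts were already recorded in the text preceding Theorem 4.1, so the argument is in essence a single invocation of the Proposition.

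For completeness I would re-examine each hypothesis in turn. Strong homogeneity is the only substantive ingredient, and it is exactly the content cited from van Mill \cite{vM1}: every nonempty clopen subspace of each of the five spaces is homeomorphic to the whole space. I would take this as given. The remaining hypotheses are routine: each space is a subspace of $\mathbb R$ or $\mathbb R^2$, hence separable and metrizable, and each carries a base of clopen sets, so $\ind = 0$ --- for $\mathbb C$ this follows from $\mathbb C\simeq D^\omega$, for $\mathbb Q$ and $\mathbb P$ from basic neighborhoods of the form $(a,b)\cap\mathbb Q$ with irrational endpoints, respectively $(a,b)\cap\mathbb P$ with rational endpoints, which are clopen in the respective subspace, and for the two products because a finite product of zero-dimensional separable metrizable spaces is again of this type.

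Before applying the Proposition I would observe that the NDH property is posed non-vacuously for each space, since the family of closed nowhere dense subsets is nonempty: none of these five spaces has an isolated point, so every singleton is closed and nowhere dense. The conclusion then follows at once from the Proposition applied to each of the five spaces separately. There is no genuine obstacle in this argument: the corollary is a direct corollary of the Proposition, and its single nontrivial input --- the strong homogeneity of the five spaces --- is already available from the cited work and quoted above.
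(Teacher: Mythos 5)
Your argument is correct and is exactly the paper's intended derivation: the corollary follows by applying the preceding Proposition (every strongly homogeneous separable metrizable zero-dimensional space is NDH) to the five spaces, whose strong homogeneity, separability, metrizability, and zero-dimensionality are recorded in the text before Theorem 4.1 with reference to van Mill. Your additional checks of $\ind=0$ and of the non-vacuity of the class of closed nowhere dense subsets are harmless elaborations of the same route.
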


The following proposition together with Corollary \ref{cor_NDH} gives an answer to \cite[Question 2.13 (a)]{ChK1}.

\begin{pro} Let $X$ be an NDH topological space and $D$ be a non-empty discrete topological space. Then the topological union 
$Y = X \oplus D$ of $X$ and $D$ is NDH but it is neither homogeneous nor strongly homogeneous. In particular, $Y$ is not (s)DH.  
\end{pro}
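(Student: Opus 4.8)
The plan is to split the statement into its four assertions---$Y$ is $NDH$, $Y$ is not homogeneous, $Y$ is not strongly homogeneous, and $Y$ is not $(s)DH$---and treat them in an order that isolates the one genuinely substantive point. Throughout I would use the elementary facts about the topological sum $Y = X \oplus D$: both summands are clopen in $Y$, a subset of $X$ has the same interior and closure whether computed in $X$ or in $Y$ (because $X$ is open-and-closed), and every point of $D$ is isolated in $Y$ (because $D$ is discrete and open in $Y$).

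The crux is a single structural observation. Since $X$ is $NDH$, it carries a \emph{nonempty} closed nowhere dense subset $F_0$, and any $p \in F_0$ must be non-isolated in $X$: an isolated $p \in F_0$ would give $\{p\} \subseteq \Int F_0$, contradicting $\Int F_0 = \emptyset$. Thus $X$, and hence $Y$, has a non-isolated point, while $D$ nonempty supplies isolated points of $Y$; a space with both isolated and non-isolated points is not homogeneous, as recorded in the first example of Section~1, so $Y$ is not homogeneous. For strong homogeneity, observe that $\{d\}$ with $d \in D$ is a nonempty clopen one-point subspace, whereas $Y$ itself is clopen and infinite (a non-isolated point in a Tychonoff space forces infinitude); these two clopen subspaces are not homeomorphic, so $Y$ is not strongly homogeneous. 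Finally, $Y$ not being $(s)DH$ is immediate from the earlier propositions: a $DH$ space is $1$-homogeneous, hence homogeneous, and $sDH$ implies $DH$, so the failure of homogeneity rules out both.

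It remains to verify that $Y$ is $NDH$, which is where the argument has real content. I would first show that every closed nowhere dense $F \subseteq Y$ is contained in $X$: if some $d \in D$ lay in $F$, then $\{d\}$ would be a nonempty open subset of $F$, contradicting $\Int F = \emptyset$. Since $X$ is clopen, such an $F$ is then closed and nowhere dense \emph{in $X$} as well. Consequently, given any two closed nowhere dense subsets $F, G$ of $Y$ together with a homeomorphism $f \colon F \to G$, we have $F, G \subseteq X$ with $F, G$ closed nowhere dense in $X$, and the $NDH$ property of $X$ furnishes an automorphism $\bar f$ of $X$ extending $f$. (The hypothesis also guarantees $F_0 \subseteq X$ is closed nowhere dense in $Y$, so $NDH$ is a meaningful property for $Y$.)

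The last step is a routine gluing: define $\tilde f \colon Y \to Y$ by $\tilde f|_X = \bar f$ and $\tilde f|_D = \mathrm{id}_D$. As $\bar f$ and $\mathrm{id}_D$ are homeomorphisms of the two clopen summands, $\tilde f$ is an automorphism of $Y$, and it extends $f$ since $f$ is supported on $F \subseteq X$. I expect the only delicate point---and the place where the hypothesis is truly used---to be the opening observation that an $NDH$ space must possess a genuine nonempty closed nowhere dense set, and hence a non-isolated point. Without this reading of the $NDH$ hypothesis the non-homogeneity conclusion would break down (for instance, if $X$ were a single point, $Y$ would be a finite discrete, therefore homogeneous, space), so I would make this interpretation explicit at the outset.
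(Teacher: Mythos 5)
Your proof is correct and follows essentially the same route as the paper's: the paper's entire argument is the single observation that the closed nowhere dense subsets of $Y$ coincide with those of $X$ (precisely your key step), with the remaining claims left implicit. Your additional care in noting that the $NDH$ hypothesis must be read as requiring a \emph{nonempty} closed nowhere dense subset---so that $X$ has a non-isolated point and the non-homogeneity of $Y$ actually follows---is a worthwhile clarification of a detail the paper omits.
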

\begin{proof} Note that the class of all closed nowhere dense subsets of $Y$ coincides with the class of all closed nowhere dense subsets of $X$.
\end{proof}

\begin{que} Are the Sorgenfrey line $\mathbb S$ or the van Douwen's group  $H$ NDH? 
\end{que}

It is easy to see that each NDH space without isolated points is sDH. So the following fact is valid.

\begin{pro} Let $X$ be a space without isolated points. If $X$ is not sDH then $X$ is not NDH.
In particular, the Sorgenfrey powers $\mathbb S^n, n \geq 2, $ the real line $\mathbb R$, the circle $S^1$ are not NDH.
\end{pro}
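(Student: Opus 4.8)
The plan is to prove the first assertion through its contrapositive, using the remark that immediately precedes the statement: every NDH space without isolated points is sDH. So first I would fix an NDH space $X$ without isolated points, take two discrete subsets $A$ and $B$ together with an arbitrary bijection $f \colon A \to B$, and aim to extend $f$ to an automorphism of $X$; verifying this for all such $A$, $B$, $f$ is precisely the sDH condition.

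The key observation is the fact noted earlier in the paper that in a space without isolated points every discrete subset is closed and nowhere dense; hence $A$ and $B$ are legitimate closed nowhere dense subsets to which the NDH property applies. The second ingredient is that $A$ and $B$, as discrete subspaces, carry the discrete topology, so that \emph{any} bijection between them---in particular $f$---is automatically a homeomorphism. Combining these two facts, $f \colon A \to B$ is a homeomorphism between closed nowhere dense subsets of the NDH space $X$, and therefore extends to an automorphism $\bar f$ of $X$. This establishes the auxiliary claim, and the first assertion of the proposition is its logical contrapositive: a space without isolated points that is not sDH cannot be NDH.

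For the \emph{in particular} part, I would check for each listed space that it has no isolated points and that it fails to be sDH, so that the first assertion can be invoked. None of $\mathbb S^n$, $\mathbb R$, $S^1$ has an isolated point (the Sorgenfrey topology has none, and absence of isolated points is inherited by finite products). To see that they are not sDH I would route through the implication that every sDH space is DH: the real line $\mathbb R$ is not even DH by the earlier example, and $\mathbb S^n$ for $n \geq 2$ is not DH by the cited proposition, so neither is sDH; the circle $S^1$ is known directly to be DH but not sDH. In each case the hypotheses of the first assertion are met, yielding that the space is not NDH.

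I do not anticipate a genuine obstacle, since the argument is essentially bookkeeping resting on the two elementary facts above. The only point requiring mild care is the routing of the sDH failure: for $\mathbb R$ and $\mathbb S^n$ it passes through the contrapositive of ``sDH $\Rightarrow$ DH'' applied to the known failure of DH, whereas for $S^1$ the failure of sDH is available directly.
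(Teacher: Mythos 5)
Your proposal is correct and follows exactly the route the paper intends: the paper justifies this proposition only by the preceding remark that every NDH space without isolated points is sDH, which is precisely the contrapositive you prove (discrete subsets are closed and nowhere dense in the absence of isolated points, and any bijection between them is a homeomorphism, so NDH supplies the extension). Your verification of the ``in particular'' part, routing the failure of sDH for $\mathbb R$ and $\mathbb S^n$ through the failure of DH and citing the direct failure for $S^1$, matches the facts established earlier in the paper.
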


\section{Non-zero-dimensional $sDH$ spaces}

 \begin{pro} (\cite[Propositions 2.14 and 2.15]{ChK1}) Every finite-dimensional Euclidean space $\mathbb R^n, n >1,$ is 
 an  sDH space.
\end{pro}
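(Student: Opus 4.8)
The plan is to unwind the definition of sDH for the specific space $\mathbb{R}^n$. First I would identify what the \emph{discrete} subsets of $\mathbb{R}^n$ actually are: in a metric space the family of singletons $\{\{a\}:a\in A\}$ is discrete exactly when $A$ is a closed, discretely embedded (equivalently locally finite) subset, i.e.\ $A$ has no accumulation point in $\mathbb{R}^n$. Since $\mathbb{R}^n$ is separable, every such $A$ is at most countable. The finite case is already handled, because by Example~\ref{n-homogeneous}(c) the manifold $\mathbb{R}^n$ ($n>1$) is strongly $k$-homogeneous for every $k$, so any bijection between finite sets extends. Thus I may assume $A=\{a_i\}_{i\in\mathbb{N}}$ and $B=\{b_i\}_{i\in\mathbb{N}}$ are both infinite, closed and locally finite, and the given bijection is $f(a_i)=b_i$. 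It then suffices to produce a single autohomeomorphism $h$ of $\mathbb{R}^n$ with $h(a_i)=b_i$ for all $i$, since such an $h$ automatically satisfies $h(A)=B$ and restricts to a bijection of the complements, so $h$ extends $f$.

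The essential tool is a \emph{point-pushing} homeomorphism, and this is the one place where $n>1$ is used. For distinct points $p,q$ and any finite \emph{forbidden} set $S$ with $p,q\notin S$, I would first choose an arc $\gamma$ from $p$ to $q$ disjoint from $S$; such an arc exists precisely because $n\geq 2$ gives room to route around the zero-dimensional obstruction $S$ (on $\mathbb{R}$ this fails, consistent with $\mathbb{R}$ not being $DH$). Then there is a homeomorphism $g$ of $\mathbb{R}^n$, equal to the identity outside an arbitrarily thin tube $T$ around $\gamma$, with $g(p)=q$ and $g|_S=\mathrm{id}$. By shrinking $T$ I can also keep the support $\mathrm{supp}(g)\subset T$ away from any prescribed finite set of points I wish to leave fixed.

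With this tool I would build $h$ as a limit of finite compositions $h_k=g_k\circ\cdots\circ g_1$, constructed inductively: at stage $k$ I select the smallest index $i$ not yet settled, and let $g_k$ push the current position $h_{k-1}(a_i)$ to $b_i$ while keeping fixed all target points $b_j$ already placed (these form a finite set at each stage, so the point-pushing lemma applies). Because the bijection $f$ is prescribed, no separate ``back'' half of a back-and-forth is needed; a single bookkeeping of indices guarantees every $a_i$ is eventually moved onto $b_i$ and, once there, is never moved again. The candidate homeomorphism is $h=\lim_k h_k$, with inverse $\lim_k h_k^{-1}$.

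The main obstacle is convergence: I must arrange that $\{\mathrm{supp}(g_k)\}$ is a \emph{locally finite} family, for then each point of $\mathbb{R}^n$ has a neighborhood on which all but finitely many $g_k$ are the identity, making both $h$ and $h^{-1}$ continuous and hence $h$ a genuine homeomorphism rather than merely a continuous bijection. Local finiteness of $A\cup B$ helps—only finitely many of the relevant points meet any compact set—but the difficulty is that an arbitrary pairing $a_i\mapsto b_i$ may join a near point to a distant one, forcing long pushing-tubes that could pile up in a fixed compact region. I would resolve this either by a direct bookkeeping that confines the tubes within a fixed locally finite cover of $\mathbb{R}^n$, or, more cleanly, by first applying a preliminary autohomeomorphism that repositions $A$ so that the pairing becomes proper (monotone to infinity), reducing the task to short local pushes whose supports are manifestly locally finite. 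Once this support control is secured, the verification that $h$ is the desired homeomorphism is routine, and $\mathbb{R}^n$ is $sDH$. I expect the existence of the individual pushes and the index bookkeeping to be easy; the convergence/support-control step is where the real work lies.
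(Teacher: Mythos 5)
The paper itself imports this proposition from \cite{ChK1} without proof, so your argument can only be judged on its own terms. Your reduction is sound: the discrete subsets of $\mathbb{R}^n$ are the closed sets without accumulation points, hence countable; the finite case follows from strong $k$-homogeneity of connected manifolds of dimension at least $2$; and the infinite case does come down to realizing a prescribed bijection $a_i\mapsto b_i$ by an infinite composition of point-pushes with a locally finite family of supports. But you have correctly located the crux and then left it unresolved, and of your two proposed fixes one does not work: the ``preliminary repositioning'' of $A$ so that the pairing becomes proper is circular, because producing an autohomeomorphism that carries $A$ onto a set standing in a \emph{prescribed} correspondence with $B$ is exactly an instance of the problem being solved --- the wildness of the pairing is not removed, only relocated into the preliminary map.

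The missing idea that makes your first route work is quantitative: although a single pair $(a_i,b_i)$ may join a near point to a far one, this happens for only finitely many $i$ over any compact set, because $A$ and $B$ are each locally finite, so for every $R$ all but finitely many indices satisfy $\min(|a_i|,|b_i|)>R$. Hence one can choose, in advance and inductively, pairwise disjoint arcs $\gamma_i$ from $a_i$ to $b_i$ avoiding $(A\cup B)\setminus\{a_i,b_i\}$ and lying outside the ball of radius $\min(|a_i|,|b_i|)-1$; the complement of a closed ball together with finitely many previously chosen arcs and a closed discrete set is still arcwise connected when $n\geq 3$, while for $n=2$ one must additionally route so that no ``pocket'' is created, which takes genuine care. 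Shrinking tubes $T_i\supset\gamma_i$ to be pairwise disjoint then yields a locally finite family, and the map equal to the $i$-th push on $T_i$ and the identity elsewhere is a homeomorphism, as is its inverse. Choosing all disjoint tubes in advance is also cleaner than your sequential scheme of pushing the \emph{current} position $h_{k-1}(a_i)$: with disjoint tubes no point is moved twice and no bookkeeping of already-placed targets is needed. Without the observation above, the convergence step --- which you yourself flag as ``the real work'' --- remains a genuine gap.
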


\begin{pro}\label{nonNDH} Every finite-dimensional Euclidean space $\mathbb R^n, n \geq 2,$  is not NDH.
\end{pro}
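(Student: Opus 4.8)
The plan is to use the necessary condition recorded in Remark \ref{remark}: if an automorphism of $\mathbb{R}^n$ carries a set $A$ onto a set $B$, then $\mathbb{R}^n \setminus A \simeq \mathbb{R}^n \setminus B$. Hence, to witness the failure of NDH, it suffices to produce two closed nowhere dense subsets $F, G \subset \mathbb{R}^n$ that are homeomorphic to each other (so that homeomorphisms $f \colon F \to G$ exist) but whose complements $\mathbb{R}^n \setminus F$ and $\mathbb{R}^n \setminus G$ are not homeomorphic. Then no automorphism of $\mathbb{R}^n$ maps $F$ onto $G$, so in particular no homeomorphism $f \colon F \to G$ extends, and $\mathbb{R}^n$ is not NDH.

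For the construction I would take $F$ and $G$ to be two disjoint tame $(n-1)$-spheres, arranged in the two essentially different ways. Let $F$ be a pair of concentric spheres, say $\{|x| = 1\} \cup \{|x| = 2\}$ (nested), and let $G$ be a pair of spheres lying outside one another, say two unit spheres centered far apart (unnested). Each of $F$ and $G$ is compact, hence closed, and has empty interior in $\mathbb{R}^n$, hence nowhere dense; and both are homeomorphic to $S^{n-1} \sqcup S^{n-1}$, so $F \simeq G$ and a homeomorphism $f \colon F \to G$ is available.

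The heart of the argument---and the step I expect to be the main obstacle---is to separate the two complements as topological spaces. Both $\mathbb{R}^n \setminus F$ and $\mathbb{R}^n \setminus G$ have exactly three connected components, so a naive component count does not distinguish them; I would instead count the \emph{contractible} components. For the nested $F$ the three components are the open inner ball $\{|x| < 1\}$, the open shell $\{1 < |x| < 2\}$, and the exterior $\{|x| > 2\}$; the shell and the exterior each deformation retract onto a copy of $S^{n-1}$ and are therefore not contractible (as $n - 1 \geq 1$), so $\mathbb{R}^n \setminus F$ has precisely one contractible component. For the unnested $G$ the components are two open balls together with the exterior of two disjoint balls; the latter is homotopy equivalent to $S^{n-1} \vee S^{n-1}$ and is not contractible, so $\mathbb{R}^n \setminus G$ has precisely two contractible components. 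Since a homeomorphism maps components onto components and preserves contractibility, the number of contractible components is a topological invariant of the whole complement; as $1 \neq 2$, we conclude $\mathbb{R}^n \setminus F \not\simeq \mathbb{R}^n \setminus G$, completing the proof.
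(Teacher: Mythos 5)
Your proof is correct, but it takes a different route from the paper's. You exhibit \emph{two} closed nowhere dense sets --- a nested and an unnested pair of $(n-1)$-spheres --- that are homeomorphic to each other while their complements are not, invoking the necessary condition $X\setminus A\simeq X\setminus B$ from Remark \ref{remark} and distinguishing the complements by counting contractible components (which does require knowing that $S^{n-1}$ and $S^{n-1}\vee S^{n-1}$ are not contractible for $n\ge 2$, i.e.\ a small amount of homotopy theory). The paper instead uses a \emph{single} set $X=\{0,1,2\}\times\mathbb R^{n-1}$, three parallel hyperplanes, together with the self-homeomorphism fixing the first and swapping the other two; an extension would carry an arc joining the $0$- and $1$-hyperplanes onto a connected set joining the $0$- and $2$-hyperplanes, which must meet the middle hyperplane, contradicting injectivity. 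The paper's argument is more elementary (only connectedness and separation) and, importantly, survives the passage to $l_2\simeq\mathbb R\times l_2$ in Proposition 5.4, where your contractibility count would break down (spheres in $l_2$ have contractible complementary components); your argument, on the other hand, makes the link to the general obstruction of Remark \ref{remark} explicit and produces a pair of homeomorphic closed nowhere dense sets that no automorphism can interchange, which is a slightly stronger-looking conclusion. Both are complete proofs of the stated proposition.
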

\begin{proof}
To see that consider the closed nowhere dense subset $X = (\{0\} \cup \{1\} \cup \{2\})  \times \mathbb R^{n-1}$ of $\mathbb R^n$
and a homeomorphism $f : X \to X$ such that $f(0, \overline{x}) = (0, \overline{x})$,
$f(1, \overline{x}) = (2, \overline{x})$ and $f(2, \overline{x}) = (1, \overline{x})$ for any  $\overline{x} \in \mathbb R^{n-1}$. It is easy to  see that $f$ cannot be extended to an automorphism of $\mathbb R^n$.
In fact, if such automorphism $\bar{f} : \mathbb R^n \to \mathbb R^n$ exists and $I$ is a bounded closed interval connecting the sets $\{0\}\times R^{n-1}$ and $\{1\}\times R^{n-1}$
then $\bar{f}(I) \cap \bar{f}(\{2\}\times R^{n-1}) \ne \emptyset$. We have a contradiction.
\end{proof}

\begin{pro} (\cite[Proposition 2.16]{ChK1}) The separable Hilbert space $l_2$ is sDH.
\end{pro}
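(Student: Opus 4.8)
The plan is to reduce the statement to the classical Z-set unknotting theorem of infinite-dimensional topology. First I would note that since $A$ and $B$ are discrete subsets, the subspace topology on each is discrete, so \emph{any} bijection $f\colon A\to B$ is automatically a homeomorphism of the subspaces. Moreover, because $l_2$ is separable metrizable (hence hereditarily separable) a discrete subspace is at most countable, and because $l_2$ has no isolated points each discrete subset is closed and nowhere dense. Thus the task becomes: extend a homeomorphism between two closed, nowhere dense, at most countable discrete subsets of $l_2$ to an automorphism of $l_2$.

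The key step is to show that every discrete subset $A=\{a_n\}$ of $l_2$ is a \emph{Z-set}, i.e.\ that for each $\varepsilon>0$ there is a map $g\colon l_2\to l_2\setminus A$ with $\sup_x\|g(x)-x\|<\varepsilon$. Using that $A$ is a discrete family of singletons, I would choose a locally finite family of pairwise disjoint balls $B(a_n,r_n)$ with $r_n<\varepsilon$, each meeting $A$ only at its center. On each ball I would perform a radial push-off of the center, supported inside $B(a_n,r_n)$ and moving points by less than $r_n$, and set $g$ equal to the identity elsewhere. Here the infinite-dimensionality of $l_2$ is essential: unlike in $\mathbb R^k$, the punctured ball admits such a small self-map of the ball that misses the center, because the unit sphere of $l_2$ is contractible (Klee), so there is no degree obstruction. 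Local finiteness guarantees that $g$ is continuous, and by construction its image avoids every $a_n$. Equivalently, one may simply invoke the classical facts that every compact subset of $l_2$ is a Z-set and that a locally finite closed union of Z-sets in $l_2$ is again a Z-set.

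With $A$ and $B$ identified as Z-sets, I would conclude by applying the Z-set unknotting (homeomorphism extension) theorem for $l_2$: every homeomorphism between two Z-sets of $l_2$ extends to an automorphism of $l_2$. The general form of this theorem carries a homotopy hypothesis, but it is vacuous here because $l_2$ is contractible, so the homeomorphism $f\colon A\to B$ extends to the desired automorphism $\bar f$ of $l_2$. Since $A$, $B$, and $f$ were arbitrary, this proves that $l_2$ is $sDH$. Note that this argument handles the finite and the countably infinite cases uniformly, so no separate treatment of small discrete sets is required.

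I expect the main obstacle to be the Z-set step: making the push-off construction rigorous while keeping it $\varepsilon$-close to the identity and genuinely missing $A$, and verifying that the hypotheses of the unknotting theorem hold in the form valid for $l_2$. It is precisely here that one must exploit that $l_2$ is infinite-dimensional; the analogous claim fails in $\mathbb R^k$, where a single point is not a Z-set, which is consistent with $\mathbb R^n$ being $sDH$ (Proposition for $\mathbb R^n$, $n>1$) yet not $NDH$ (Proposition \ref{nonNDH}).
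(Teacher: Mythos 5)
Your argument is correct and is essentially the approach the paper itself relies on: the survey gives no proof of this proposition beyond citing \cite[Proposition 2.16]{ChK1}, but it derives that $Q$ and $\mu^k$ are sDH from exactly this Z-set unknotting scheme, and for $l_2$ the facts you need (every closed discrete subset is a $Z$-set, and every homeomorphism between $Z$-sets of $l_2$ extends to an automorphism, with no homotopy hypothesis) are classical. The only point worth writing out in full is the local finiteness of your family of balls, which follows by applying collectionwise normality of metric spaces to the discrete family of singletons and shrinking each ball into the corresponding open set; this is routine and not a gap.
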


\begin{pro}  The separable Hilbert space $l_2$ is not NDH.
\end{pro}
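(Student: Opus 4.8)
The plan is to mimic the argument of Proposition \ref{nonNDH}, replacing the role of the last $n-1$ Euclidean coordinates by a closed hyperplane of $l_2$. First I would fix a topological splitting $l_2 \simeq \mathbb{R} \times H$, where $H = \{\overline{x} \in l_2 : x_1 = 0\}$ is a closed linear subspace (itself a copy of $l_2$) and the first factor records the coordinate $x_1$. Under this identification I would take the subset
\[
X = \{0,1,2\} \times H \subset \mathbb{R} \times H \simeq l_2,
\]
a union of three parallel affine hyperplanes, to play the role of $(\{0\}\cup\{1\}\cup\{2\})\times\mathbb{R}^{n-1}$.

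Next I would verify that $X$ is closed and nowhere dense: each slice $\{i\}\times H$ is a proper closed affine subspace, hence a closed set with empty interior, and a finite union of such sets is again closed and nowhere dense. I would then define the homeomorphism $f\colon X \to X$ by $f(0,\overline{x}) = (0,\overline{x})$, $f(1,\overline{x}) = (2,\overline{x})$, $f(2,\overline{x}) = (1,\overline{x})$ for $\overline{x}\in H$, exactly as in the finite-dimensional case, and observe that $f$ is a well-defined automorphism of $X$.

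The core of the proof is to show that $f$ admits no extension to an automorphism $\bar{f}$ of $l_2$. Suppose such $\bar{f}$ exists, fix $\overline{x}_0 \in H$, and let $I = [0,1]\times\{\overline{x}_0\}$ be the segment joining the slice $\{0\}\times H$ to the slice $\{1\}\times H$; note that $I$ is disjoint from $\{2\}\times H$. Since $\bar{f}$ extends $f$, the set $\bar{f}(I)$ is connected and joins $\{0\}\times H$ to $\{2\}\times H$. Projecting $\bar{f}(I)$ onto the first (real) coordinate yields a connected subset of $\mathbb{R}$ containing $0$ and $2$, hence containing $1$; therefore $\bar{f}(I)$ meets the middle slice $\{1\}\times H = \bar{f}(\{2\}\times H)$. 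By injectivity of $\bar{f}$ this forces $I$ to meet $\{2\}\times H$, a contradiction.

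The step I expect to require the most care is the separation observation underlying the projection argument, namely that the middle hyperplane $\{1\}\times H$ disconnects $l_2$ into the two half-spaces determined by the sign of $x_1 - 1$. This is the infinite-dimensional analogue of the ``bounded interval crossing'' used in Proposition \ref{nonNDH}, and here it conveniently reduces to a statement about the one-dimensional factor $\mathbb{R}$, so no genuine infinite-dimensional difficulty arises; the remaining points (closedness and nowhere density of $X$, the definition of $f$, and the final injectivity contradiction) are routine.
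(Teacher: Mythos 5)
Your proposal is correct and follows exactly the route the paper intends: the paper's own proof simply says to represent $l_2$ as $\mathbb{R}\times l_2$ and repeat the argument of Proposition~5.2, which is precisely what you carry out, with the three parallel hyperplanes, the swap homeomorphism, and the connectedness-of-the-first-coordinate contradiction. The only difference is that you supply the details the paper leaves implicit, and you do so accurately.
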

\begin{proof} Represent $l_2$ as $\mathbb R\times l_2$ and proceed as in the proof of Proposition~\ref{nonNDH}.
\end{proof}

Recall that a space $X$ is called {\it strongly locally homogeneous (SLH)} if it has a basis of open sets $\mathcal B$ such that for every $U\in\mathcal B$ and any two points $x$ and $y$ in $U$ there exists a homeomorphism $f\colon X\to X$ such that $f(x) =y$ and $f$ is identity on $X\setminus U$.  Note that any $n$-manifold for $n>1$, any Hilbert space manifold, and any Hilbert cube manifold  are examples of SLH spaces. 

In 1976 Bales \cite{Ba} showed that  a connected SLH space, no two-point subset of which disconnects it, is strongly $n$-homogeneous for any $n$.

\begin{ex}\label{nondh}(\cite[Proposition 2.17]{ChK1}) Let $M = \mathbb R\times S^1.$ Then $M$ is a $2$-manifold which is strongly $n$-homogeneous for any $n$ and SLH but not DH.
\end{ex}

The following question arises.

 \begin{que}\label{manifold_question} Which metrizable manifolds are DH or sDH? 
 \end{que}
 
 We will answer this question in the next section.

  \begin{prob} Find conditions for SLH space to be (s)DH. 
 \end{prob}

Recall that a closed subset $A$ of a metric space $X$ is called a $Z$-set if for any $\epsilon >0$ there exists a map $f\colon X \to X$ such that $f$ is $\epsilon$-close to the identity map and $f(X)\cap A = \varnothing$. 

Let $\mu^k$ denote the Menger cube for each $k=1,2,\dots$, and $Q=[0,1]^\omega$ the Hilbert cube.  It is well-known that any finite subset of $\mu^k$ or $Q$ is a $Z$-set in the respective space, and any homeomorphism between two $Z$-sets of $\mu^k$ or $Q$ can be extended to a homeomorphism of the whole space (see, e.g. \cite{Ch} for the case of the Hilbert cube, and \cite{A, Be} for the case of Menger compacta). Consequently, $Q$ and $\mu^k$ are examples of sDH compacta.

Moreover, it is well-known  that $\mu^1$ and $S^1$ are the only homogeneous $1$-dimensional Peano continua (by a continuum we mean metrizable compact connected space). By the result of Ungar \cite{U}, any $2$-homogeneous continuum is locally connected, i.e. Peano. Therefore we have the following observation.

\begin{pro}(\cite[Proposition 2.22]{ChK1})
$\mu^1$ and $S^1$ are the only DH $1$-dimensional continua.
\end{pro}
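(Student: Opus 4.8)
The plan is to split the statement into its two implications: that any DH $1$-dimensional continuum must be $\mu^1$ or $S^1$, and that $\mu^1$ and $S^1$ are themselves DH. Both directions should reduce to facts already assembled in the excerpt, so the proof is mostly a matter of chaining them together in the right order.

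For the forward direction, suppose $X$ is a DH $1$-dimensional continuum. First I would invoke Proposition~\ref{compact_sDH}(b): since $X$ is compact Hausdorff and DH, it is $n$-homogeneous for every $n \geq 1$; in particular it is $2$-homogeneous (and, taking $n = 1$, homogeneous). Next I would apply Ungar's theorem, quoted just above the proposition, which guarantees that a $2$-homogeneous continuum is locally connected, i.e. Peano. Thus $X$ is a homogeneous $1$-dimensional Peano continuum. Finally I would cite the well-known classification stated immediately before the proposition, by which the only homogeneous $1$-dimensional Peano continua are $\mu^1$ and $S^1$; hence $X$ is one of these two.

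For the converse, I would verify that both candidate spaces are DH. The space $\mu^1$ is sDH --- being a Menger cube, finite subsets are $Z$-sets and every homeomorphism between two of its $Z$-sets extends to an automorphism, as recalled in the excerpt --- and since every sDH space is DH, it follows that $\mu^1$ is DH. The circle $S^1$ is DH by the earlier Example, so this case is immediate. Together these give the two spaces as genuine members of the class, completing the characterization.

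The only genuinely nontrivial ingredients are the two external classification results, namely Ungar's theorem and the description of $1$-dimensional homogeneous Peano continua; but both are invoked as already-established facts in the surrounding text, so the main obstacle is essentially bookkeeping rather than new argument. What must be checked carefully is that ``DH'' is precisely the hypothesis feeding Proposition~\ref{compact_sDH}(b), which then delivers the $2$-homogeneity that is the single property Ungar's result requires. Once that chain is in place, the proposition follows with no further computation.
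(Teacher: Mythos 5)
Your argument is correct and follows essentially the same chain as the paper: DH gives $n$-homogeneity for all $n$ (in fact Proposition 3.4(c) already yields this without compactness), Ungar's theorem gives local connectedness, the classification of homogeneous $1$-dimensional Peano continua gives $\mu^1$ or $S^1$, and the converse direction is covered by the $Z$-set extension property of $\mu^1$ and the Example establishing that $S^1$ is DH. No gaps.
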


Since as was observed earlier $S^1$ is not sDH but $\mu^1$  is sDH, we get immediately the following corollary.

\begin{cor}(\cite[Corollary 2.23]{ChK1}) $\mu^1$ is the only sDH $1$-dimensional continuum
\end{cor}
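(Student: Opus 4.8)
The plan is to deduce this corollary directly from the preceding proposition together with two facts already recorded in the excerpt. First I would observe that every $sDH$ space is $DH$, so any $sDH$ $1$-dimensional continuum is in particular a $DH$ $1$-dimensional continuum. Applying the preceding proposition, which identifies $\mu^1$ and $S^1$ as the \emph{only} $DH$ $1$-dimensional continua, the space in question must be homeomorphic to one of these two.

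Next I would eliminate $S^1$. Since it was noted earlier that $S^1$ is $DH$ but not $sDH$, the circle cannot be our space, and this leaves $\mu^1$ as the unique possibility. Finally, to confirm that the conclusion is not vacuous, I would recall that $\mu^1$ is itself $sDH$: it was listed among the $sDH$ compacta, because in a compactum the discrete subsets are precisely the finite subsets, every finite subset of $\mu^1$ is a $Z$-set, and any homeomorphism between two $Z$-sets of $\mu^1$ extends to an automorphism of the whole space. Hence $\mu^1$ is an $sDH$ $1$-dimensional continuum and it is the only one.

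The main obstacle here is essentially absent: all of the substantive work has already been carried out in establishing the preceding proposition (which rests on Ungar's theorem that $2$-homogeneity forces local connectedness, together with the classification of homogeneous $1$-dimensional Peano continua) and in the separate verifications that $S^1$ fails to be $sDH$ while $\mu^1$ succeeds. The corollary is therefore a short syllogism, and the only point requiring any care is to keep track of the fact that the distinguishing property—extendability of every bijection between discrete subsets to an automorphism—is exactly what separates $\mu^1$ from $S^1$ among the two $DH$ candidates.
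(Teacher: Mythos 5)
Your proposal is correct and follows exactly the paper's route: the preceding proposition reduces the candidates to $\mu^1$ and $S^1$, the implication $sDH \Rightarrow DH$ justifies this reduction, and the previously recorded facts that $S^1$ is not $sDH$ while $\mu^1$ is $sDH$ finish the argument. No differences worth noting.
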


Similarly to Proposition 5.2 we can prove the following.

\begin{pro} $\mu^1$ is not NDH.
\end{pro}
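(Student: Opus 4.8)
The plan is to mirror the argument of Proposition~\ref{nonNDH}, replacing the three parallel hyperplanes of $\mathbb{R}^n$ and the separating role of the middle one by a single zero-dimensional separating set of $\mu^1$. First I would use that $\ind \mu^1 = 1$ together with local connectivity to produce a \emph{connected} open set $U\subset\mu^1$ with $\overline{U}\ne\mu^1$ whose boundary $S=\Bd U$ is zero-dimensional: indeed $\mu^1$ has a basis of open sets with zero-dimensional boundaries, and passing to the component of a point (which is open, hence clopen in the basic set) keeps the boundary inside the original one, so $U$ may be taken connected. Then $\mu^1\setminus S=U\sqcup V$ with $V=\mu^1\setminus\overline{U}$, both nonempty and open, and $S$ separates $U$ from $V$. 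Since $S$ is a nonempty compact zero-dimensional space, I would embed a homeomorphic copy $S'\cong S$ inside the open set $V$ (every nonempty open subset of $\mu^1$ contains a Cantor set, in which $S$ embeds), and fix a point $p\in U$.

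Next I would set $F=\{p\}\cup S\cup S'$, which is closed and nowhere dense (each of $\{p\}$, $S$, $S'$ has empty interior because $\mu^1$ is connected, locally connected, and has no isolated points), and define $f\colon F\to F$ to be the identity on $\{p\}$ and to interchange $S$ and $S'$ via a chosen homeomorphism and its inverse. As the three pieces are disjoint compacta, $F$ is their topological sum and $f$ is a homeomorphism of $F$ onto itself. The goal is to show that $f$ admits no extension to an automorphism of $\mu^1$, which is exactly the failure of NDH.

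Suppose $\bar f$ were such an extension. Using local connectivity I would join $p$ to some point $w\in S=\Bd U$ by an arc $I$ contained in $U\cup\{w\}$; in particular $I$ avoids $V$ and hence avoids $S'$. Now $\bar f(I)$ is an arc whose endpoints are $\bar f(p)=p\in U$ and $\bar f(w)=f(w)\in S'\subset V$, so, being connected and meeting both open pieces of $\mu^1\setminus S$, it meets the separator $S$ at some point $\bar f(s)$ with $s\in I$. Since $\bar f(S')=f(S')=S$ and $\bar f$ is injective, from $\bar f(s)\in S=\bar f(S')$ I get $s\in S'$, contradicting $I\cap S'=\varnothing$. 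This contradiction shows that no such $\bar f$ exists, so $\mu^1$ is not NDH.

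The routine part is the bookkeeping with disjoint compacta and the extension relations $\bar f|_{\{p\}}=\mathrm{id}$ and $\bar f(S')=S$. The main obstacle is the purely topological input about $\mu^1$: that it carries a closed nowhere dense (indeed zero-dimensional) set separating it into two nonempty open pieces, and that an interior point of a connected open set can be joined to a boundary point by an arc meeting the boundary only at its endpoint. Both follow from $\ind \mu^1=1$ and the fact that $\mu^1$ is a Peano continuum, but verifying them carefully — especially arranging the separating set so that the connecting arc stays strictly on one side of it — is where the real work lies.
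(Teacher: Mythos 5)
Your argument is correct and is essentially the paper's intended proof: the paper merely says ``similarly to Proposition 5.2'' (the $\mathbb{R}^n$ case), and your configuration --- a point $p$, a zero-dimensional separator $S=\Bd U$, and a copy $S'$ of $S$ on the other side, with a homeomorphism fixing $p$ and swapping $S$ with $S'$ --- is exactly the right adaptation of that three-slice separation argument to $\mu^1$. (As a minor simplification, the arc and its accessibility issue can be avoided entirely by taking the connected set $\overline{U}\subseteq \mu^1\setminus S'$, which already contains $p$ and a point of $S$.)
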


\begin{cor} There is no $1$-dimensional continuum which is NDH.
\end{cor}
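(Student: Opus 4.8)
The plan is to derive a contradiction by chaining together the three results established immediately above. Suppose, for contradiction, that $X$ is a $1$-dimensional continuum that is NDH. The key observation I would make first is that $X$ has no isolated points: a $1$-dimensional continuum is nondegenerate (a one-point space has dimension $0$), so $X$ is connected and has more than one point; since $X$ is $T_1$, an isolated point would be a nonempty proper clopen singleton, contradicting connectedness. In particular, every finite subset of $X$ is closed and nowhere dense, so the class of closed nowhere dense subsets of $X$ is nonempty and the NDH hypothesis on $X$ is not vacuous.

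Next I would invoke the earlier proposition asserting that a space without isolated points which fails to be sDH also fails to be NDH; in contrapositive form this says that any NDH space without isolated points is sDH. Applying this to $X$ yields that $X$ is sDH. I would then appeal to the preceding corollary, which states that $\mu^1$ is the only sDH $1$-dimensional continuum, to conclude that $X \simeq \mu^1$. Finally, the proposition proved just before this corollary asserts that $\mu^1$ is \emph{not} NDH, which contradicts our standing assumption that $X$ is NDH. This contradiction shows that no $1$-dimensional continuum can be NDH.

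Since the corollary is an immediate consequence of combining the two results directly above it, I do not expect any deep obstacle. The only point deserving a moment's care is the verification that a $1$-dimensional continuum genuinely has no isolated points, which is exactly what licenses the bridge proposition (NDH without isolated points $\Rightarrow$ sDH); once that is noted, the argument is a short syllogism and the conclusion follows at once.
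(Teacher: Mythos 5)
Your proposal is correct and follows exactly the chain the paper intends: NDH plus no isolated points gives sDH, the preceding corollary forces $X \simeq \mu^1$, and the proposition that $\mu^1$ is not NDH yields the contradiction. The observation that a $1$-dimensional continuum has no isolated points (being connected, $T_1$, and nondegenerate) is the right point to make explicit.
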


\begin{que} Is there a separable metrizable non-zero dimensional NDH space?
\end{que}

Recalll that Erd\"os spaces $E(\mathbb Q)$ and $E(\mathbb P)$ are one-dimensional spaces that are homeomorphic to their own squares.

\begin{que} 
Are  Erd\"os spaces $E(\mathbb Q)$ and $E(\mathbb P)$  (strongly) $n$-homogeneous for every integer $n \geq 2$?
\end{que}

\begin{que} 
Are  Erd\"os spaces $E(\mathbb Q)$ and $E(\mathbb P)$  $(s)DH$ or NDH?
\end{que}

\section{Charakterization of $sDH$ (DH) connected metrizable manifolds of dimension $\geq 2$.}

Note that every connected compact metrizable $k$-manifold, $k \geq 2$, is $sDH$ (see Example~\ref{n-homogeneous} and Proposition~\ref{compact_sDH}). 

In 2023 the authors \cite{ChK2} introduced the following notions.

A space is called {\it generalized continuum} if it is locally compact, $\sigma$-compact, non-compact, connected, and locally connected Hausdorff space.

Let $X$ be a generalized continuum. 
There exists a sequence (generally speaking, not unique) of compact subsets $\{ X_n\}_{n=1}^\infty$ of $X$ such that $X_n\subset {\rm int} X_{n+1}$ for all $n=1,2, \dots$  and $\cup_{n=1}^\infty X_n = X$.

Further,
by {\it an end} of $X$ with respect to $\{ X_n\}_{n=1}^\infty$ we mean a sequence $\{ Y_n\}_{n=1}^\infty$ such that for all $n$  the set $Y_n$ is a non-empty connected component of $X\setminus X_n$ and $Y_{n+1}\subset Y_n$.

Note that the Euclidean plane $\mathbb R^2$ as well as the product $S^1 \times \mathbb R$ are generalized 
continua. Moreover,  the plane  $\mathbb R^2$ has one end and the space  $S^1 \times \mathbb R$ has two ends.

This distinction can be extended to the following fact.

\begin{thm} (\cite[Theorem 4.3]{ChK2}) A connected non-compact metrizable manifold of dimension $n \geq 2$ is $sDH$ ($DH$) iff
it has one end.
\end{thm}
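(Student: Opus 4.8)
The plan is to prove the two implications that sandwich both equivalences. Since every $sDH$ space is $DH$, it suffices to show (i) that one end implies $sDH$ and (ii) that two or more ends preclude $DH$: then ``one end $\Leftrightarrow sDH$'' and ``one end $\Leftrightarrow DH$'' both follow. I would dispose of (ii) first, as it isolates the genuine obstruction. Every self-homeomorphism $h$ of $X$ induces a permutation of the set of ends, and for a closed discrete set $D$ the property ``$|D \cap Y_m^e| = \aleph_0$ for every $m$'' --- that $D$ runs out to the end $e$ --- is carried by $h$ to the corresponding property at the end $h(e)$. Assuming $X$ has at least two ends $e_1, e_2$, I would build along two disjoint rays a countable closed discrete set $A$ running out to both $e_1$ and $e_2$, and a countable closed discrete set $B$ running out to $e_1$ only. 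Then $|A| = |B| = \aleph_0$, but the number of ends to which a set runs out is a homeomorphism invariant, equal to $2$ for $A$ and $1$ for $B$; hence no $h$ with $h(A) = B$ exists and $X$ is not $DH$.

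For (i), one end implies $sDH$, I would argue on a prescribed bijection $\phi\colon A \to B$ between discrete subsets, which are automatically closed and, as $X$ is separable metrizable, countable. If $A$ is finite the conclusion is immediate: a connected manifold of dimension $\geq 2$ is SLH and no two-point subset disconnects it, so Bales' theorem makes it strongly $n$-homogeneous for every $n$ and $\phi$ extends. The substance is the infinite case. Here I would fix a compact exhaustion $X_1 \subset \mathrm{int}\,X_2 \subset \cdots$ with $\bigcup_m X_m = X$, chosen using one-endedness so that each $X \setminus X_m$ is connected and non-compact; one absorbs the finitely many relatively compact complementary components into $X_m$, and may take the $X_m$ to be compact submanifolds with bicollared separating boundaries. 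Since $A$ and $B$ are closed and discrete, each meets every $X_m$ in a finite set and so runs out to the single end.

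The core is a telescoping construction resting on a homogeneity-at-the-end sublemma: in the connected non-compact manifold $X \setminus X_m$ (dimension $\geq 2$), any finite set of points can be pushed arbitrarily deep toward the unique end by a homeomorphism supported in $X \setminus X_m$ and fixing its boundary. This is precisely the freedom that one end supplies and that fails for the two-ended $\mathbb{R} \times S^1$ of Example~\ref{nondh}. Using it I would first normalize both sets onto a fixed ray $R$ escaping to the end, realizing homeomorphisms carrying $A$ and $B$ onto one standard discrete set $D_0 \subset R$; each such homeomorphism is assembled as a locally finite infinite composition of compactly supported pieces, so that the composite is a genuine homeomorphism, continuous at the end by local finiteness. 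Transporting $\phi$ to $D_0$ reduces the task to extending an arbitrary bijection of the countable set $D_0$ lying on $R$, which is achieved by permuting its points through homeomorphisms supported in disjoint cells along $R$; composing back produces $\bar{\phi}$ with $\bar{\phi}|_A = \phi$.

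I expect the main obstacle to be the infinite assembly together with the fact that $\phi$ need not respect exhaustion depth: a point of $A$ far out toward the end may be sent by $\phi$ to a point of $B$ near the centre, so no naive level-by-level matching works. The one-ended sliding sublemma is exactly what reconciles the two depth-orderings, since it lets me reorder points freely along the single channel to the end; the delicate points are to make these sliding homeomorphisms boundary-fixing and depth-monotone with uniform control, and to guarantee that the resulting countable composition is locally finite and hence continuous precisely at the unique end. Verifying this sublemma and the local finiteness of the composition is the technical heart of the argument.
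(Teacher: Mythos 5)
Your overall architecture is sound and is essentially the route of the cited proof in \cite{ChK2} (the survey itself contains no proof of this theorem, only the reference, so the comparison is with that argument): since $sDH$ implies $DH$, it suffices to show that one end gives $sDH$ and that two or more ends preclude $DH$. Your necessity argument is complete and correct --- it is exactly the mechanism behind Example~\ref{nondh} for $M=\mathbb R\times S^1$: the set of ends at which a closed discrete set accumulates is an invariant of the pair under self-homeomorphisms, so two countably infinite closed discrete sets spread over different numbers of ends cannot be matched. The finite case of sufficiency via Bales is likewise exactly what the paper records in Example~\ref{n-homogeneous}(c), and the normalization of $A$ and $B$ onto a standard discrete set on a ray, by a locally finite infinite composition of compactly supported moves whose supports escape along the unique end, is the right device.

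The one step that, as written, would fail is the last one: extending an arbitrary bijection of $D_0$ ``by permuting its points through homeomorphisms supported in disjoint cells along $R$.'' A general bijection of $D_0$ may consist of a single infinite orbit visiting the points in a wildly non-monotone order; no locally finite family of pairwise disjoint cells can each contain a whole orbit, and a single cell containing all of $D_0$ merely reproduces the original problem. A standard repair is to factor the permutation as a composition of two involutions (every permutation of a countable set is such a product) and to realize each involution by a pairwise disjoint, locally finite family of tubes, each tube a thin neighbourhood of an arc joining the two points of one transposition, routed --- using $\dim X\ge 2$ and the connectedness of the complements $X\setminus X_m$ supplied by one-endedness --- so as to miss the remaining points of $D_0$ and to stay outside $X_m$ whenever both endpoints lie outside $X_{m+1}$; this last routing is what yields local finiteness. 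You correctly flag local finiteness as the technical heart, but without the two-involution decomposition (or an equivalent back-and-forth scheme whose supports escape to the end) the ``disjoint cells'' step does not go through for bijections that do not respect exhaustion depth.
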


\begin{prob} Let $X$ be a metrizable strongly locally homogeneous generalized continuum with one end. Is $X$  (s)DH?
	\end{prob}
\section{Other generalization of homogeneity}

A separable space $X$ is called CDH if for any two countable dense subsets $A, B$ of $X$ there exists a homeomorphism $f: X \to X$ such that $f(A) = B$. 

The CDH-spaces do not need to be homogeneous (for example, the topological sum $\mathbb R \oplus \mathbb P $) but if a CDH space is connected then it is homogeneous. In 2012 van Mill \cite{vM2} proved that connected CDH spaces are even n-homogeneous for each positive integer $n$, and there exists a connected Lindel\"of CDH space $M$ which is not strongly $2$-homogeneous.

\begin{que} 
Is the space $M$ $DH$?
\end{que}

In 1975 Ungar \cite{U} proved for a locally compact separable metrizable space $X$ such that no finite set separates it the equivalence of the following conditions:
\begin{itemize}
\item[(a)] $X$ is CDH,
\item[(b)] $X$ is $n$-homogeneous for every positive integer $n$,
\item[(c)] $X$ is strongly $n$-homogeneous for every positive integer $n$.
\end{itemize}

\vskip 0.3 cm

\noindent(V.A. Chatyrko)\\
Department of Mathematics, Linkoping University, 581 83 Linkoping, Sweden.\\
vitalij.tjatyrko@liu.se

\vskip 0.3 cm
\noindent(A. Karassev) \\
Department of Mathematics,
Nipissing University, North Bay, Canada\\
alexandk@nipissingu.ca

\end{document}